\newcommand{\longhookrightarrow}{{\lhook\joinrel\relbar\joinrel\rightarrow}}
\newcommand{\nobracket}{}
\newcommand{\tmname}[1]{\textsc{#1}}
\newcommand{\tmop}[1]{\ensuremath{\operatorname{#1}}}
\newcommand{\tmrsub}[1]{\ensuremath{_{\textrm{#1}}}}
\newenvironment{proof}{\noindent\textbf{Proof\ }}{\hspace*{\fill}$\Box$\medskip}
\newtheorem{corollary}{Corollary}
\newtheorem{lemma}{Lemma}
\newtheorem{proposition}{Proposition}
\newtheorem{theorem}{Theorem}
\begin{document}
\title{An elementary proof of Poincare Duality with local coefficients}
\author{Fang Sun}
\maketitle

\section{Introduction}

The statement and proof of the Poincare Duality for (possibly noncompact) orientable manifolds
without boundary are abound. The duality has a version for possibly
non-orientable manifolds using local coefficients. Several proofs of this result can be found in the literature (e.g. [Sp]). Yet these proof involves either sheaf theory or something equivalent and thus far from elementary. This
note is written in the attempt to provide a record of this generalized Poincare Duality available to the general audience.

Readers are assumed to be familiar with basics of the theory of orientation of
manifolds (the material of [Ha] Section 3.3 or any equivalent) and the
theory of homology with local coefficients (e.g. [Wh] Chapter VI,
Section 1,2,3).

Notations of [Ha] and [Wh] will be borrowed and used in this note.

Throughout our discussion, $M$ will be a manifold of dimension n.

Let $R$ be a ring with identity such that $2 \cdot \tmop{id} \neq 0$.

\section{Preliminaries}

\subsection{The Orientation Bundle}

Let $M_R = \underset{x \in M}{\sqcup} H_n (M|x ; R)$, topologized as follows:

For any chart $\varphi : W \rightarrow \mathbbm{R}^n$, any $B \subseteq W$
such that $\varphi (B)$ is an open ball with finite radius and any element
$\alpha_B \in H_n (M|B ; R)$, define $U (\alpha_B)$ as the set of images of
$\alpha_B$ under the canonical $H_n (M|B ; R) \rightarrow H_n (M|x ; R)$ as
$x$ ranges through $B$. The collection of $U (\alpha_B)$ forms a basis for a
topology on $M_R$. The canonical projection $p : M_R \rightarrow M$ is a covering map.

Recall that a bundle of groups/modules/rings over a space $X$ is defined as a
functor from the fundamental groupoid of $X$ to the category of
groups/modules/rings.

There is a bundle of $R$-modules on $M$, denoted also as $M_R$, such that
$M_R (x) = p^{- 1} (x) = H_n (M|x ; R)$ (with the obvious module structure)
and $M_R ([u]) = L_{u \ast}$ where $u$ is a path in $M$ and $L_u : p^{- 1} (u
(1)) \rightarrow p^{- 1} (u (0))$ is the map defined as in [Hatcher p. 69]
(this construction can also be found in [Whitehead V.1. Example 5]). $M_R$ is
called the ($R$-) orientation bundle of $M$.

\subsection{The Canonical Double Cover}

Let $\widetilde{M} = \{ \pm \mu_x \otimes \tmop{id} \in H_n (M|x ; R)  |  \mu_x \tmop{is} a \tmop{generator} \tmop{of}
H_n (M|x ; \mathbbm{Z}) \}$, here we are using the identification $H_n (M|x ; R) \cong H_n
(M|x ; \mathbbm{Z}) \otimes R$. The manifold $\widetilde{M}$ is oriented as follows:

For each $\mu_x \otimes \tmop{id} \in \widetilde{M}$ where $\mu_x$ is a generator
of $H_n (M|x ; \mathbbm{Z})$, let $B \subseteq M$ be an open ball with finite
radius containing $x$. Let $\mu_B \in H_n (M|B ; \mathbbm{Z})$ be the element
corresponding to $\mu_x$ via the canonical isomorphism $H_n (M|B) \rightarrow
H_n (M|x)$. Then $\mu_B \otimes \tmop{id} \in H_n (M|B ; R) \cong H_n (M|B ;
\mathbbm{Z}) \otimes R$. \\
Let $O_{\mu_x \otimes \tmop{id}}$ be the element
corresponding to $\mu_x \otimes \tmop{id}$ under the identification (each map
below is an isomorphism):
\[ H_n (\widetilde{M} | \mu_x \otimes \tmop{id} ; R) \longleftarrow H_n (U (\mu_B
   \otimes \tmop{id}) | \mu_x \otimes \tmop{id} ; R \nobracket)
   \overset{p_{\ast}}{\longrightarrow} H_n (B|x ; R) \longrightarrow H_n (M|x
   ; R) \]
Then $O_{\mu_x \otimes \tmop{id}}$ is independent of $B$ and $\{ O_{\mu_x
\otimes \tmop{id}} \}$ is an orientaion on $\widetilde{M}$. This orientation will
be referred to as the chosen orientation in what follows.

It is not hard to see that $\widetilde{p} = p_{| \widetilde{M}}$ is a covering space
of index two.

\

Denote by $\tau$ the unique Deck transformation of $\widetilde{M}$ with no fixed
points. Namely, $\tau (y) = - y$ for any $x \in M$ and $y \in H_n (M|x ; R)$.

\begin{lemma}
  The involution $\tau$ reverses the chosen orientation on $\widetilde{M}$.
\end{lemma}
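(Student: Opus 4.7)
The plan is to compare, for a point $y = \mu_x \otimes \tmop{id} \in \widetilde{M}$ and its image $\tau(y) = -\mu_x \otimes \tmop{id}$, the two defining chains of isomorphisms that produce $O_y$ and $O_{\tau(y)}$, using $\tau$ to identify them and exploiting the fact that $\tau$ is a deck transformation of $\widetilde{p}$.

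First I would note the two evident naturality statements. Because $\tau$ acts fiberwise by negation in each $H_n(M|x;R)$, and because $-(\mu_B \otimes \tmop{id}) = (-\mu_B)\otimes\tmop{id}$, the homeomorphism $\tau$ restricts to a homeomorphism $U(\mu_B \otimes \tmop{id}) \to U(-\mu_B \otimes \tmop{id})$ carrying $\mu_x \otimes \tmop{id}$ to $-\mu_x \otimes \tmop{id}$. This gives a commuting square
\[
\begin{array}{ccc}
H_n(\widetilde{M}\,|\,\mu_x\otimes\tmop{id}; R) & \xleftarrow{\;\cong\;} & H_n(U(\mu_B\otimes\tmop{id})\,|\,\mu_x\otimes\tmop{id}; R) \\
\tau_\ast \downarrow & & \tau_\ast \downarrow \\
H_n(\widetilde{M}\,|\,-\mu_x\otimes\tmop{id}; R) & \xleftarrow{\;\cong\;} & H_n(U(-\mu_B\otimes\tmop{id})\,|\,-\mu_x\otimes\tmop{id}; R)
\end{array}
\]
in which the horizontal arrows are the first excision isomorphisms from the definition of the chosen orientation.

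Next I would use $\widetilde{p}\circ\tau=\widetilde{p}$. Since each $U(\pm\mu_B\otimes\tmop{id})$ is mapped homeomorphically onto $B$ by $p$, the two $p_\ast$'s in the definitions of $O_y$ and $O_{\tau(y)}$ both land in $H_n(B|x;R)$ and, together with $\tau_\ast$ on the left-hand column, form a commuting triangle. Composing with the final inclusion into $H_n(M|x;R)$, I obtain one big commutative diagram comparing the two defining chains for $O_{\mu_x\otimes\tmop{id}}$ and $O_{-\mu_x\otimes\tmop{id}}$, connected vertically by $\tau_\ast$.

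Chasing through this diagram, the image of $O_{\mu_x\otimes\tmop{id}}$ in $H_n(M|x;R)$ is by definition $\mu_x\otimes\tmop{id}$, hence $\tau_\ast(O_{\mu_x\otimes\tmop{id}})$ lands on $\mu_x\otimes\tmop{id}$ as well. On the other hand the image of $O_{-\mu_x\otimes\tmop{id}}$ in $H_n(M|x;R)$ is $-\mu_x\otimes\tmop{id}$. Since the horizontal composition is an isomorphism, this forces $\tau_\ast(O_{\mu_x\otimes\tmop{id}})=-O_{-\mu_x\otimes\tmop{id}}$, which is exactly the statement that $\tau$ reverses the chosen orientation.

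The only delicate point is the bookkeeping of which excision/covering iso is being used on each side; once it is verified that both the fiberwise negation defining $\tau$ and the covering identity $p\circ\tau=p$ make the relevant squares commute strictly (not up to a sign), the conclusion follows formally. The hypothesis $2\cdot\tmop{id}\neq 0$ in $R$ is what guarantees that $O_{-\mu_x\otimes\tmop{id}}$ and $-O_{-\mu_x\otimes\tmop{id}}$ are genuinely distinct, so that the result has content.
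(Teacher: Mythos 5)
Your proof is correct and takes essentially the same approach as the paper: both rely on the commutative diagram obtained by connecting the two defining chains of isomorphisms for $O_{\mu_x\otimes\tmop{id}}$ and $O_{-\mu_x\otimes\tmop{id}}$ via $\tau_\ast$ on the $\widetilde{M}$-side and the identity on the $M$-side, then chasing through to conclude $\tau_\ast(O_{\mu_x\otimes\tmop{id}})=-O_{-\mu_x\otimes\tmop{id}}$. Your write-up is more explicit than the paper's in explaining why each square and triangle commutes (naturality of excision under the homeomorphism $\tau$, and $\widetilde{p}\circ\tau=\widetilde{p}$), but the underlying argument is identical.
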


\begin{proof}
  This comes from the commutativity of the diagram{\tmname{}}
 
  \begin{tikzcd}[cramped, column sep=small]
  H_n (\widetilde{M} | \mu_x \otimes \tmop{id} ; R) \arrow[d, "\tau"] &
      H_n (U (\mu_B \otimes \tmop{id}) | \mu_x \otimes \tmop{id} ; R) \arrow[l] \arrow[d, "\tau"] \arrow[r, "p_{\ast}"] &   H_n (B|x ; R) \arrow[r] \arrow[d, equal] &
         H_n (M|x ; R) \arrow[d, equal]\\
       H_n (\widetilde{M} | - \mu_x \otimes \tmop{id} ; R)  &
       H_n (U (- \mu_B \otimes \tmop{id}) | - \mu_x \otimes \tmop{id} ; R) \arrow[l] \arrow[r, "p_{\ast}"]   &  H_n (B|x ; R)  \arrow[r] &  H_n (M|x ; R) 
\end{tikzcd}
  \\
  
\end{proof}

\subsection{The Fundamental Class}

Next we define a fundamental class for $M$.

Let $C_{\ast} (X ; R)$ stands for the singular chain of a space $X$ with
coefficient in $R$. Define
\[ C_n^+ (\widetilde{M} ; R) = \{ \alpha \in C_n (\widetilde{M} ; R) | \tau (\alpha) =
   \alpha \}, C_n^- (\widetilde{M} ; R) = \{ \alpha \in C_n (\widetilde{M} ; R) | \tau
   (\alpha) = - \alpha \} \]

Let $K \subseteq M$ be a compact subspace. Define $\widetilde{K} = \widetilde{p}^{-
1} (K)$ and
\begin{eqnarray*}
  C_n^+ (\widetilde{M} | \widetilde{K} ; R) = C_n^+ (\widetilde{M} ; R) / C_n^+ (\widetilde{M}
  ; R) \cap C_n (\widetilde{M} - \widetilde{K}) &  & \\
  C_n^- (\widetilde{M} | \widetilde{K} ; R) = C_n^- (\widetilde{M} ; R) / C_n^- (\widetilde{M}
  ; R) \cap C_n (\widetilde{M} - \widetilde{K}) &  & 
\end{eqnarray*}
There are exact sequences
\begin{eqnarray*}
  0 \longrightarrow C_n^- (\widetilde{M} | \widetilde{K} ; R) \longhookrightarrow C_n
  (\widetilde{M} | \widetilde{K} ; R) \overset{\Sigma}{\longrightarrow} C_n^+
  (\widetilde{M} | \widetilde{K} ; R) \longrightarrow 0 &  & (1)\\
  0 \longrightarrow C_n^+ (\widetilde{M} | \widetilde{K} ; R) \longhookrightarrow C_n
  (\widetilde{M} | \widetilde{K} ; R) \overset{\Delta}{\longrightarrow} C_n^-
  (\widetilde{M} | \widetilde{K} ; R) \longrightarrow 0 &  & (2)
\end{eqnarray*}
where $\Sigma (\alpha) = \alpha + \tau (\alpha)$ and $\Delta (\alpha) = \alpha
- \tau (\alpha)$.

The chain complex $C_{\ast} (M|K ; R)$ is isomorphic to $C_{\ast}^+ (\widetilde{M}
| \widetilde{K} ; R)$ via $\sigma \otimes r \rightarrow \widetilde{\sigma_1}
\otimes r + \widetilde{\sigma_2} \otimes r$ where $\widetilde{\sigma_1},
\widetilde{\sigma_2}$ are the two liftings of $\sigma : \Delta^n \rightarrow
M$.

By Lemma 3.27(b) of [Ha], $H_k (M|K ; R) = 0, k > n$. Thus (1) produces a
long exact sequence
\begin{eqnarray*}
  \cdots \longrightarrow 0 \longrightarrow H_n (C_{\ast}^- (\widetilde{M} |
  \widetilde{K} ; R)) \longrightarrow H_n (\widetilde{M} | \widetilde{K} ; R)
  \overset{\widetilde{p}_{\ast}}{\longrightarrow} H_n (M|K ; R) \longrightarrow
  \cdots &  & (3)
\end{eqnarray*}

There is a $\mathbbm{Z}_2$ group action on $\widetilde{M}$ such that $\bar{1} \in
\mathbbm{Z}_2$ acts by $\tau$. $\mathbbm{Z}_2$ also acts (as a group) on $R$
with $\bar{1} \cdot r = - r$. Thus one could define $C_n (\widetilde{M} | \widetilde{K} ;
\mathbbm{Z}) \otimes_{\mathbbm{Z}_2} R$ using the routine method of treating a
left $\mathbbm{Z} [\mathbbm{Z}_2]$-module as a right module.

The canonical surjection $C_n (\widetilde{M} | \widetilde{K} ; R) = C_n (\widetilde{M} |
\widetilde{K} ; \mathbbm{Z}) \otimes R \rightarrow C_n (\widetilde{M} | \widetilde{K} ;
\mathbbm{Z}) \otimes_{\mathbbm{Z}_2} R$ has the same kernel as $\Delta$ in
(2): $C_n^+ (\widetilde{M} | \widetilde{K} ; R)$. Thus we obtain an identification
\[ C_n^- (\widetilde{M} | \widetilde{K} ; R) \longleftrightarrow C_n (\widetilde{M} |
   \widetilde{K} ; \mathbbm{Z}) \otimes_{\mathbbm{Z}_2} R \]
On the other hand, let $C_{\ast} (M|K ; M_R)$ be the chain complex of $(M, M -
K)$ with coefficient in the bundle $M_R$. We can define a homomorphism
\[ \phi : C_n (\widetilde{M} | \widetilde{K} ; \mathbbm{Z}) \otimes_{\mathbbm{Z}_2} R
   \longrightarrow C_{\ast} (M|K ; M_R), \phi (\widetilde{\sigma}
   \otimes_{\mathbbm{Z}_2} r) = (r \widetilde{\sigma} (e_0)) \widetilde{p} \circ
   \widetilde{\sigma} \]
where $\widetilde{\sigma} : \Delta^n \rightarrow \widetilde{M}$ and $(r \widetilde{\sigma}
(e_0)) \widetilde{p} \circ \widetilde{\sigma}$ is represented by the element of
$C_{\ast} (M ; M_R) = \underset{\sigma : \Delta^n \rightarrow M}{\oplus} M_R
(\sigma (e_0))$ with $r \widetilde{\sigma} (e_0)$ on the $\widetilde{p} \circ
\widetilde{\sigma}$ coordinate and 0 otherwise. It is not hard to verify that
$\phi$ is an isomorphism. Hence we obtain an identification $C_{\ast}^-
(\widetilde{M} | \widetilde{K} ; R) \leftrightarrow C_{\ast} (M|K ; M_R)$. Explicitly
this comes from the diagram
\\

\begin{tikzcd}
  C_{\ast}^- (\widetilde{M} | \widetilde{K} ; R) &
  C_{\ast} (\widetilde{M} | \widetilde{K} ; R) = C_{\ast} (\widetilde{M} | \widetilde{K} ;
  \mathbbm{Z}) \otimes R \arrow[l,"\Delta"] \arrow[d] \\
   & C_{\ast} (\widetilde{M} | \widetilde{K} ;\mathbbm{Z}) \otimes_{\mathbbm{Z}_2} R \arrow[d,"\phi"] &(4) \\
   & C_{\ast} (M|K ; M_R)   
\end{tikzcd}
\\

The above identification is a chain isomorphism, thus we have
\[ H_n (C_{\ast}^- (\widetilde{M} | \widetilde{K} ; R)) \longleftrightarrow H_n (M|K ;
   M_R) \]
where $H_n (M|K ; M_R)$ is the homology group with coefficients in $M_R$.

Plug this into (3), we get
\begin{eqnarray*}
  \cdots \longrightarrow 0 \longrightarrow H_n (M|K ; M_R) \longrightarrow H_n
  (\widetilde{M} | \widetilde{K} ; R) \overset{\widetilde{p}_{\ast}}{\longrightarrow} H_n
  (M|K ; R) \longrightarrow \cdots &  & (5)
\end{eqnarray*}
Since (4) is natural with respect to compact subspaces $K_1 \subseteq K_2
\subseteq M$, so is (5).

The chosen orientation on $\widetilde{M}$ uniquely determines an element in $\nu_K
\in H_n (\widetilde{M} | \widetilde{K} ; R)$ that restrict to the orientation at each
$\widetilde{x} \in \widetilde{K}$ (cf. Lemma 3.27 (a) of [Ha]).

\begin{lemma}
  $\widetilde{p}_{\ast} (\nu_K) = 0$ for any compact subspace $K \subseteq M$.
\end{lemma}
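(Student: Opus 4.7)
I would prove the statement first for compact $K$ contained in a single trivializing neighborhood of $\widetilde{p}$, and then bootstrap to general compact $K$ by induction using a Mayer--Vietoris sequence for compact subsets.

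\textbf{Local case.} Suppose $K \subseteq B$ where $B \subseteq M$ is an open ball over which the double cover $\widetilde p$ trivializes, so that $\widetilde p^{-1}(B) = \widetilde B_1 \sqcup \widetilde B_2$ with each $\widetilde p|_{\widetilde B_i}$ a homeomorphism onto $B$. Setting $\widetilde K_i = \widetilde K \cap \widetilde B_i$, excision yields a direct sum decomposition
\[
H_n(\widetilde{M}|\widetilde{K};R) \;\cong\; H_n(\widetilde{M}|\widetilde{K}_1;R) \oplus H_n(\widetilde{M}|\widetilde{K}_2;R),
\]
and by uniqueness in Lemma 3.27(a) of [Ha] the fundamental class $\nu_K$ corresponds to $(\nu_{\widetilde K_1},\nu_{\widetilde K_2})$. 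Tracing through the definition of the chosen orientation (or, equivalently, invoking Lemma 1), I would check that for $\widetilde x_i = \pm \mu_x \otimes \tmop{id}$ lying above $x \in K$, the map $\widetilde p_\ast\colon H_n(\widetilde M|\widetilde x_i;R) \to H_n(M|x;R)$ sends $O_{\widetilde x_i}$ to $\pm \mu_x$. Consequently the restriction of $\widetilde p_\ast(\nu_K) \in H_n(M|K;R) \cong H_n(B|K;R)$ at every $x \in K$ is $\mu_x + (-\mu_x) = 0$. Since $B$ is orientable, Lemma 3.27(a) of [Ha] (uniqueness of a class with prescribed pointwise restrictions) forces $\widetilde p_\ast(\nu_K)=0$.

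\textbf{General case.} For arbitrary compact $K$, cover $K$ by finitely many trivializing balls and write $K = K_1 \cup \dots \cup K_N$ with each $K_j$ contained in such a ball. Induct on $N$: for $K' = K_1 \cup \dots \cup K_{N-1}$, apply the Mayer--Vietoris sequence
\[
0 \longrightarrow H_n(M|K;R) \longrightarrow H_n(M|K';R) \oplus H_n(M|K_N;R) \longrightarrow H_n(M|K' \cap K_N;R),
\]
which is left-exact because $H_{n+1}(M|K' \cap K_N;R) = 0$ by Lemma 3.27(b) of [Ha]. Naturality of $\widetilde p_\ast$ and of the restriction maps, together with the fact that $\nu_K$ restricts to $\nu_{K'}$ and $\nu_{K_N}$ (again by uniqueness), sends $\widetilde p_\ast(\nu_K)$ to $(\widetilde p_\ast(\nu_{K'}),\widetilde p_\ast(\nu_{K_N}))$. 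Both entries vanish by the inductive hypothesis and the local case, so $\widetilde p_\ast(\nu_K) = 0$.

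\textbf{Main obstacle.} The substantive step is the local case, where one must carefully identify the two sheets of the cover over a trivializing ball with the two opposite $R$-orientations of that ball and exploit the sign captured by Lemma 1. Note that a naive approach via $\widetilde p_\ast \circ \tau_\ast = \widetilde p_\ast$ and $\tau_\ast \nu_K = -\nu_K$ only yields $2\,\widetilde p_\ast(\nu_K) = 0$, which is insufficient without torsion assumptions on $H_n(M|K;R)$; this is why the reduction to trivializing balls (where $B$ itself is orientable) is needed. The general-case induction is routine once the compact-subspace Mayer--Vietoris sequence is in place (which can be assembled from the relative homology of the triad $(M, M-K', M-K_N)$ together with a small-simplices/excision argument) and its naturality with respect to $\widetilde p_\ast$ verified.
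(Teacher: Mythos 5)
Your proof is correct and rests on the same core cancellation mechanism as the paper, but it is organized differently and does more work than is needed. The paper applies the uniqueness clause of Lemma 3.27(a) of [Ha] to the whole manifold $M$ at the outset, reducing the claim to showing that $\widetilde{p}_{\ast}(\nu_K)$ restricts to $0$ in $H_n(M|x;R)$ for each single $x\in K$. At a single point, an evenly covered neighborhood $U$ of $x$ always produces the two-sheet splitting $\widetilde{p}^{-1}(U)=\widetilde{U}\sqcup\widetilde{U}'$, so the cancellation you carry out over a trivializing ball (identifying the two preimages with the two local orientations and observing that $\widetilde{p}_{\ast}$ sends their sum to $\mu_x+(-\mu_x)=0$) can be done at once for every compact $K$, with no induction. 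Your route instead invokes Lemma 3.27(a) only after restricting to a $K$ lying inside an orientable ball, and then bootstraps to general $K$ via a Mayer--Vietoris induction. Both steps are valid, but the induction is superfluous: the uniqueness in Lemma 3.27(a) is an injectivity statement for the restriction-to-points map on any compact subset of any $n$-manifold, orientable or not, so there is no need to work inside a chart. Your observation that the naive argument $\widetilde{p}_{\ast}\circ\tau_{\ast}=\widetilde{p}_{\ast}$ and $\tau_{\ast}\nu=-\nu$ yields only $2\,\widetilde{p}_{\ast}(\nu_K)=0$, and that one must therefore pass to a local two-sheet decomposition, is exactly right and captures the design of the paper's proof; you simply chose a larger local model (a trivializing ball) where a point suffices.
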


\begin{proof}
  By the uniqueness part of Lemma 3.27(a) of [Ha], it suffice to prove
  $\widetilde{p}_{\ast} (\nu_K)$ restrict to $0$ at each $x \in M$. Let
  $\widetilde{p}^{- 1} (x) = \{ \widetilde{x}, \widetilde{x}' \}$. We have a commutative
  diagram

\begin{center}
\begin{tikzcd}
       H_n (\widetilde{M} | \widetilde{K} ; R)\arrow[r] \arrow[d,"\widetilde{p}_{\ast}"]  &  H_n (\widetilde{M}
      | \{ \widetilde{x}, \widetilde{x}' \} ; R) \arrow[d,"\widetilde{p}_{\ast}"] \\
       H_n (M|K ; R)  \arrow[r]  &  H_n (M|x ; R) 
\end{tikzcd}
\end{center}
  
  where horizontal maps are induced by inclusions. Hence the goal become
  showing that $\nu_K$ maps to $0$ via $H_n (\widetilde{M} | \widetilde{K} ; R)
  \rightarrow H_n (\widetilde{M} | \{ \widetilde{x}, \widetilde{x} \} ; R) \rightarrow H_n
  (M|x ; R)$.
  
  Take an open neighborhoods $U$ of $x$ such that \ $\widetilde{p}^{- 1} (U) =
  \widetilde{U} \sqcup \widetilde{U}'$ and $\widetilde{x} \in \widetilde{U}, \widetilde{x}' \in
  \widetilde{U}'$. Consider the following commutative diagrams\\

 \begin{tikzcd}
      &    H_n (\widetilde{M} | \widetilde{x} ; R) \oplus H_n (\widetilde{M} |
      \widetilde{x}' ; R) \\
       H_n (\widetilde{M} | \widetilde{K} ; R)  \arrow[r,"k_{\ast}"]&  H_n (\widetilde{M} | \{ \widetilde{x},
      \widetilde{x}' \} ; R) \arrow[u,"j_{\ast} \oplus j'_{\ast}"] \arrow[r,"\widetilde{p}_{\ast}"] & H_n (M|x ; R) \\
      &    H_n (\widetilde{U} | \widetilde{x} ; R) \oplus H_n (\widetilde{U}' |
      \widetilde{x}' ; R) \arrow[u,"\iota_{\ast} \oplus \iota'_{\ast}"] \arrow[r,"\widetilde{p}_{\ast}"] & 
       H_n (U|x ; R) \arrow[u, "i_{\ast}"] 
\end{tikzcd}

\begin{center}
  
 \begin{tikzcd}
       H_n (\widetilde{M} | \widetilde{x} ; R) \arrow[r,"\tau_{\ast}"]   &  H_n (\widetilde{M} | \widetilde{x}'
      ; R) \\
       H_n (\widetilde{U} | \widetilde{x} ; R) \arrow[u, "j_{\ast} \circ \iota_{\ast}"] \arrow[r,"\tau_{\ast}"] &  H_n (\widetilde{U}' | \widetilde{x}'; R) \arrow[u,"j_{\ast}' \circ \iota_{\ast}'"]
\end{tikzcd}
\end{center}
  
  where $\iota, \iota', j, j', i, k$ are inclusions. By excision and
  additivity, $\iota_{\ast} \oplus \iota'_{\ast}$ is an isomorphism. On the
  other hand, it is not hard to observe that $(j_{\ast} \oplus j'_{\ast})
  \circ (\iota_{\ast} \oplus \iota'_{\ast}) = (j_{\ast} \circ \iota_{\ast})
  \oplus (j_{\ast}' \circ \iota_{\ast}')$. Obviously $i_{\ast}$ is also an
  isomorphism.
  
  For any $\alpha \in H_n (\widetilde{M} | \widetilde{K} ; R)$, $k_{\ast} (\alpha) =
  \iota_{\ast} (\beta) + \iota'_{\ast} (\gamma)$. Hence $(j_{\ast} \oplus
  j'_{\ast}) \circ k_{\ast} (\alpha) = \iota_{\ast} \circ j_{\ast} (\beta)
  \oplus \iota_{\ast}' \circ j_{\ast}' (\gamma)$. Note that $j_{\ast} \circ
  k_{\ast} (\alpha)$ and $j_{\ast}' \circ k_{\ast}' (\alpha)$ are orientations
  at $\widetilde{x}, \widetilde{x}'$ respectively. So $\tau_{\ast} \circ j_{\ast}
  \circ k_{\ast} (\alpha) = - j_{\ast}' \circ k_{\ast}' (\alpha)$. This
  implies $\tau_{\ast} (\beta) = - \gamma$. But $\widetilde{p} \circ \tau =
  \widetilde{p}$, whence $\widetilde{p}_{\ast} (\beta, \gamma) = \widetilde{p}_{\ast}
  (\beta) + \widetilde{p}_{\ast} (\gamma) = \widetilde{p}_{\ast} \circ \tau_{\ast}
  (\beta) + \widetilde{p}_{\ast} (\gamma) = \widetilde{p}_{\ast} (- \gamma) +
  \widetilde{p}_{\ast} (\gamma) = 0$. Thus $\widetilde{p}_{\ast} \circ k_{\ast}
  (\alpha) = \widetilde{p}_{\ast} \circ (\iota_{\ast} \oplus \iota'_{\ast})
  (\beta, \gamma) = i_{\ast} \circ \widetilde{p}_{\ast} (\beta, \gamma) = 0$.
\end{proof}

\

By Lemma 2 and the exactness of (5), the orientation of $\widetilde{M}$ uniquely
determines an element of $H_n (M|K ; M_R)$, denoted also as $\nu_K$. The
naturality of (4) show that $\{ \nu_K |K \subseteq M \tmop{compact} \}$ is
compatible with respect to inclusion of $K$'s and thus define an element $[M]$
of $\underset{K}{\underset{\longleftarrow}{\lim}} H_n (M|K ; M_R)$ where the
inverse limit is taken with respect to all compact $K \subseteq M$ and
inclusions $K_1 \subseteq K_2 \subseteq M$. $[M]$ is called the fundamental
class of $M$.

\

\

\subsection{Restricting to open subspaces; Compatibility}

Let $U \subseteq M$ be an open subset. Here is a few elementary facts we shall
need:

\begin{proposition}
  \
  
  i)There is a canonical embedding $\widetilde{U} \hookrightarrow \widetilde{M}$
  induced by the excision $H_n (U|x ; R) \rightarrow H_n (M|x ; R)_{}, x \in
  U$.
  
  ii)The bundle $U_R$ (of $R$-modules) is canonically isomorphic to the
  restriction of the bundle $M_R$ to $U$.
  
  iii)The chosen orientation of $\widetilde{M}$ restrict to the chosen orientation
  on $\widetilde{U}$.
  
  iv)For any $K \subseteq U$ compact, the excision
  $H\tmrsub{n}(\widetilde{U}|\widetilde{K};R){\rightarrow}H\tmrsub{n}(\widetilde{M}|\widetilde{K}$;R){}
  sends $\nu_K^U$ to $\nu_K^M$ where $\nu_K^U$, $\nu_K^M$ are elements
  determined by the chosen orientation.
  
  v)There diagram
  
\begin{center}
 \begin{tikzcd}
       H_n (M|K ; M_R)  \arrow[r] &
       H_n(\widetilde{M}|\widetilde{K};R) \\
       H_n (U|K ; M_R) \arrow[r] \arrow[u]  &
       H_n(\widetilde{U}|\widetilde{K};R) \arrow[u]
\end{tikzcd}
\end{center}
  
  commutes, where horizontal maps are those in (5) and vertical ones are
  induced by inclusion.
\end{proposition}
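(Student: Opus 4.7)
The five statements are logically cumulative, so I would treat them in order. Parts (i)--(iv) are essentially bookkeeping around excision together with the uniqueness clause of Lemma 3.27(a) of [Ha]; the substance lies in (v).

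For (i), the excision $H_n(U|x;R)\to H_n(M|x;R)$ is $R$-linear and sends integer generators to integer generators, hence the assignment $\mu_x \otimes \tmop{id} \mapsto \mu_x \otimes \tmop{id}$ (with $\mu_x$ now viewed in $H_n(M|x;\mathbbm{Z})$ via excision) injects $\widetilde{U}$ into $\widetilde{p}^{-1}(U) \subseteq \widetilde{M}$; that this is a homeomorphism onto an open subspace follows by comparing basis neighborhoods $U(\alpha_B)$ for $B \subseteq U$ a ball of finite radius, using a second excision $H_n(U|B;R) \cong H_n(M|B;R)$. Parts (ii) and (iii) then fall out from the same observation: the fibers of $U_R$ and of $M_R|_U$ coincide under (i), the holonomy $L_u$ along a path in $U$ is assembled from lifts through small balls inside $U$ (so it is computed identically in both bundles), and the zig-zag defining $O_{\mu_x \otimes \tmop{id}}$ only involves small balls inside $U$. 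For (iv), both $\nu_K^M$ and the image of $\nu_K^U$ under the excision $H_n(\widetilde{U}|\widetilde{K};R) \to H_n(\widetilde{M}|\widetilde{K};R)$ restrict at every $\widetilde{x} \in \widetilde{K}$ to the same chosen orientation (by (iii)), so by the uniqueness clause of Lemma 3.27(a) they must agree.

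For (v) I would unpack the horizontal maps in (5). Each is the composite
\[ H_n(M|K;M_R) \xleftarrow{\,\cong\,} H_n(C_\ast^-(\widetilde{M}|\widetilde{K};R)) \longhookrightarrow H_n(\widetilde{M}|\widetilde{K};R), \]
where the first arrow is diagram (4) read backwards and the second is induced by the inclusion $C_\ast^- \hookrightarrow C_\ast(\widetilde{M}|\widetilde{K};R)$ from (1). Naturality can be verified stage by stage with respect to the inclusion of pairs $(\widetilde{U},\widetilde{U}-\widetilde{K}) \hookrightarrow (\widetilde{M},\widetilde{M}-\widetilde{K})$ provided by (i): this inclusion commutes with $\tau$ and with $\widetilde{p}$, so it induces a morphism of the short exact sequences (1) and (2); moreover $\Delta$, the quotient by $\mathbbm{Z}_2$, and $\phi$ are each given by pointwise formulas on simplices that manifestly respect the inclusion of chain complexes (here (ii) is needed to identify $U_R$ with $M_R|_U$, so that the target of $\phi$ makes uniform sense in both rows). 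Stacking the resulting commutative squares yields the diagram in (v).

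The main obstacle is notational rather than mathematical: one must keep the identification $U_R \cong M_R|_U$ from (ii) visible throughout, so that the left-hand vertical map---whose target is displayed with coefficients in $M_R$ rather than in $U_R$---actually equals the map induced by chain-level inclusion once the identifications of (4) are performed on both rows. Once that bookkeeping is in place, every square in sight commutes by inspection of the defining formulas.
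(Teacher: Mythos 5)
The paper itself supplies no proof of this Proposition; it is introduced with the words ``Here is a few elementary facts we shall need'' and the argument is left entirely to the reader. So there is no proof to compare yours against. As a fill-in of that gap, your plan is correct and well-organized, and it hits the right points: (i)--(iii) are excision bookkeeping tracked through the explicit zig-zag defining $O_{\mu_x\otimes\tmop{id}}$, (iv) is the uniqueness clause of [Ha] Lemma~3.27(a) applied after (iii), and (v) reduces to a stage-by-stage naturality check against the inclusion $(\widetilde{U},\widetilde{U}-\widetilde{K})\hookrightarrow(\widetilde{M},\widetilde{M}-\widetilde{K})$.

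Two small points worth making explicit if you wrote this up in full. First, in (v), the decomposition of the horizontal arrows in (5) as $H_n(M|K;M_R)\xleftarrow{\cong}H_n(C_\ast^-(\widetilde{M}|\widetilde{K};R))\hookrightarrow H_n(\widetilde{M}|\widetilde{K};R)$ is exactly the right unpacking, and the key observation that makes everything go through is precisely the one you flag: the chain inclusion is $\tau$-equivariant, so it restricts to $C^-_\ast$ and commutes with $\Delta$, with the $\mathbbm{Z}_2$-coinvariants quotient, and with $\phi$; the identification of $U_R$ with $M_R|_U$ from (ii) is what lets the two copies of $\phi$ land in a common target. Second, in (iv) you tacitly use that ``restricting at $\widetilde{x}$'' commutes with the excision $H_n(\widetilde{U}|\widetilde{K};R)\to H_n(\widetilde{M}|\widetilde{K};R)$ --- this is trivial naturality, but it is the step that turns (iii) into the hypothesis of the uniqueness clause, so it deserves a sentence. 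Neither of these is a gap; the proposal is sound.
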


\begin{corollary}
  The homomorphism $H_n (U|K ; U_R) \longrightarrow H_n (M|K ; M_R)$ induced
  by inclusion sends $\nu_K^U$ to $\nu_K^M$, where $\{ \nu_K^U \}, \{ \nu_K^M
  \}$ define $[U], [M]$ respectively.
\end{corollary}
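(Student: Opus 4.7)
The plan is a short diagram chase, combining the injectivity in the exact sequence (5) with parts (ii), (iv), and (v) of the preceding Proposition. The key observation is that $\nu_K^U$ and $\nu_K^M$ are each uniquely characterised by their images in $H_n(\widetilde{U}|\widetilde{K}; R)$ and $H_n(\widetilde{M}|\widetilde{K}; R)$: by the construction preceding the corollary, those images are precisely the orientation classes in the top row of (5), which are determined directly by the chosen orientations of $\widetilde{U}$ and $\widetilde{M}$.

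First I would rewrite the arrow $H_n(U|K; U_R) \to H_n(M|K; M_R)$ as an arrow $H_n(U|K; M_R) \to H_n(M|K; M_R)$, invoking Proposition (ii) to identify $U_R$ with the restriction of $M_R$ to $U$. Applying the exact sequence (5) and Lemma 2 to $U$ in place of $M$, the horizontal map $H_n(U|K; M_R) \to H_n(\widetilde{U}|\widetilde{K}; R)$ sends $\nu_K^U$ to the orientation class $\nu_K^{\widetilde{U}}$; similarly the horizontal map for $M$ sends $\nu_K^M$ to $\nu_K^{\widetilde{M}}$.

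Proposition (iv) then says the vertical excision $H_n(\widetilde{U}|\widetilde{K}; R) \to H_n(\widetilde{M}|\widetilde{K}; R)$ sends $\nu_K^{\widetilde{U}}$ to $\nu_K^{\widetilde{M}}$, and commutativity of the square in Proposition (v) forces the image of $\nu_K^U$ in $H_n(M|K; M_R)$ to map to the very same $\nu_K^{\widetilde{M}}$ in $H_n(\widetilde{M}|\widetilde{K}; R)$. Since $H_n(M|K; M_R) \hookrightarrow H_n(\widetilde{M}|\widetilde{K}; R)$ is injective by the exactness of (5), the image of $\nu_K^U$ must equal $\nu_K^M$, which is exactly the claim.

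The only mild obstacle is bookkeeping: keeping the two incarnations of ``$\nu_K$'' straight, one living in $H_n(\widetilde{M}|\widetilde{K}; R)$ and coming directly from the orientation of $\widetilde{M}$, the other living in $H_n(M|K; M_R)$ and defined as its unique preimage under the injection in (5). Once the identifications of Proposition (ii) are in place, everything reduces to a purely formal diagram chase with no real computation.
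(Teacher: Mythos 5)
Your proof is correct and is exactly the diagram chase the paper leaves implicit: combine Proposition (ii) to identify $U_R$ with $M_R|_U$, Proposition (v) for commutativity, Proposition (iv) to move the orientation class along the right vertical, and then use the injectivity of $H_n(M|K;M_R)\hookrightarrow H_n(\widetilde{M}|\widetilde{K};R)$ coming from the zero term in (5). This matches the intended argument, and your careful distinction between the two incarnations of $\nu_K$ (one in $H_n(\widetilde{M}|\widetilde{K};R)$, one as its unique preimage in $H_n(M|K;M_R)$) is exactly the bookkeeping the paper expects the reader to supply.
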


Thus $\nu_K^U$ is compatible with respect to inclusions of open $U$'s as well
as compact $K$'s.

\subsection{Cap Products}

We start with defining tensor product of bundle of modules. Let $G$ (resp.
$G'$) be a bundle of left (resp. right) $R$-modules over a space $X$. The
tensor product $G \otimes_R G'$ is defined as the bundle of abelian groups
where $G \otimes_R G' (x) = G (x) \otimes_R G' (x)$ and $G \otimes_R G' ([u])
= G ([u]) \otimes_R G' ([u])$ for any $x \in X$ and $u : I \rightarrow X$.

Denote the vertices of $\Delta^n$ as $e_0, e_1, \cdots, e_n$. Let $\sigma :
\Delta^n \rightarrow X$ be a continuous map. For $0 \leqslant i_1 < i_2 <
\cdots < i_k \leqslant n$, let $\sigma_{[i_1, i_2, \cdots, i_k]}$ denote
$\sigma$ restricted to the simplex $e_{i_1} e_{i_2} \cdots e_{i_k}$.

Now we are able to define the cap product on (absolute) chains. Assume that
$G$ (resp. $G'$) is a bundle of left (resp. right) $R$-modules over a space
$X$, the cap product is defined as
\begin{eqnarray*}
  C^k (X ; G) \otimes_R C_n (X ; G') & \overset{\frown}{\longrightarrow} &
  C_{n - k} (X ; G \otimes_R G')\\
  c \otimes g \sigma & \longrightarrow & (G (\sigma_{[0, n - k]}) (c
  (\sigma_{[n - k, \cdots, n]})) \otimes g) \sigma_{[0, \cdots, n - k]}
\end{eqnarray*}
where $c \in C^k (X ; G) = \underset{\rho : \Delta^k \rightarrow X}{\Pi} G
(\rho (e_0))$, $g \in G (\sigma (e_0))$ and $g \sigma \in C_n (X ; G') =
\underset{\eta : \Delta^n \rightarrow X}{\oplus} G' (\eta (e_0))$ denotes the
element which has value $g$ on the $\sigma$-coordinate and 0 otherwise.

If $A_1, A_2$ are subspaces of $X$, the above absolute cap product induces a
relative
\[ C^k (X, A_1 ; G) \otimes_R C_n (X, A_1 + A_2 ; G')
   \overset{\frown}{\longrightarrow} C_{n - k} (X, A_2 ; G \otimes_R G') \]
where the relative $C_{\ast}, C^{\ast}$ are defined in the obvious way.

\

The cap product satisfies the identity
\[ \partial (c \frown \alpha) = c \frown (\partial \alpha) - (\delta c) \frown
   \alpha, c \in C^k (X ; G), \alpha \in C_n (X ; G') \]
Note that the sign appearing in the above equation is a result of our adopting
the definition in [Wh].

There is thus an induced cap product on (co)homology
\[ H^k (X, A_1 ; G) \otimes_R H_n (X, A_1 + A_2 ; G')
   \overset{\frown}{\longrightarrow} H_{n - k} (X, A_2 ; G \otimes_R G') \]
For the special case where $X = M, A_1 = M - K, A_2 = \varnothing$ and $G' =
M_R$, we obtain
\begin{eqnarray*}
  H^k (M|K ; G) \otimes_R H_n (M|K ; M_R) \overset{\frown}{\longrightarrow}
  H_{n - k} (M ; G \otimes_R M_R) &  & (5)
\end{eqnarray*}

Naturality with respect to inclusion of compact subspaces can be easily
verified, thus (5) produces
\[ \underset{K}{\underset{\longrightarrow}{\lim}} H^k (M|K ; G) \otimes_R
   \underset{K}{\underset{\longleftarrow}{\lim}} H_n (M|K ; M_R)
   \overset{\frown}{\longrightarrow} H_{n - k} (M ; G \otimes_R M_R) \]
Note that $\underset{K}{\underset{\longrightarrow}{\lim}} H^k (M|K ; G)$ is
canonically isomorphic to $H^k_c (M ; G)$, the cohomology of $M$ with compact
support and with coefficient in $G$ (the proof of this in the case of ordinary
coefficients can be found in [Ha] Section 3.3). So the above becomes
\[ H^k_c (M ; G) \otimes_R \underset{K}{\underset{\longleftarrow}{\lim}} H_n
   (M|K ; M_R) \overset{\frown}{\longrightarrow} H_{n - k} (M ; G \otimes_R
   M_R) \]
If one choose the fundamental class $[M] \in
\underset{K}{\underset{\longleftarrow}{\lim}} H_n (M|K ; M_R)$, there is a
homomorphism
\[ H^k_c (M ; G) \overset{\frown [M]}{\longrightarrow} H_{n - k} (M ; G
   \otimes_R M_R) \]

\section{Proof of the Duality Theorem}

\begin{lemma}
  Let $U, V$ be open subsets of $M$ with $U \cup V = M$. $K \subseteq U, L
  \subseteq V$ are compact subspaces. Let $G$ be a bundle of right $R$-modules
  over $M$. Then the following diagram commutes up to sign
  
\begin{flushleft}
 \begin{tikzcd}[cramped, column sep=tiny]
      \  \arrow[r,"\delta"]&  H^k (M|K \cap L ; G)  \arrow[r] \arrow[d] & H^k (M|K ; G) \oplus H^k (M|L ; G) \arrow[r] \arrow[d] & H^k (M|K \cup L ; G)  \arrow[dd, "\frown \nu_{K \cup L}^M"]  \ \\
      (6)  &     H^k (U \cap V|K \cap L ; G) \arrow[d,"\frown \nu_{K \cap L}^{U \cap V}"]  &  H^k (U|K ; G) \oplus H^k (V|L ;     G) \arrow[d, "(\frown \nu_K^U) \oplus (\frown - \nu_L^V)"]  \\
     \ \arrow[r,"\partial"]&  H_{n - k} (U \cap V ; G \otimes_R M_R)  \arrow[r]   &  H_{n - k} (U ; G \otimes_R M_R) \oplus H_{n - k} (V ; G \otimes_R M_R) \arrow[r]   &  H_{n - k} (M ; G \otimes_R M_R) \  \\
\end{tikzcd}

\end{flushleft}
  
  where the two rows are Mayer-Vietoris sequences (see Appendix), the upper
  left and upper middle maps are induced by inclusions.
\end{lemma}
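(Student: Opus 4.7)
The diagram decomposes into three horizontally adjacent subsquares, and I will verify each separately. The middle and right squares are formal consequences of the naturality of the cap product combined with the compatibility of fundamental classes recorded in Proposition~1 and Corollary~1. The left square, which pairs the cohomology connecting homomorphism $\delta$ with the homology connecting homomorphism $\partial$, is the substantive step and accounts for the ``up to sign'' in the statement.

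For the middle and right squares I proceed as follows. Given a class $c$ at the top left of the square, push it right along the cohomology Mayer--Vietoris map and then down through the inclusion-induced map to the $U$ or $V$ piece followed by the cap product. Naturality of the cap product along the pair inclusion $(U,U-K)\hookrightarrow(M,M-K)$ gives $i_\ast((i^\ast c)\frown \nu_K^U)=c\frown i_\ast(\nu_K^U)$, and Corollary~1 identifies $i_\ast(\nu_K^U)$ with $\nu_K^M$; an identical identity holds on the $V$ side. Combining with the fact that $\nu_{K\cup L}^M$ is sent to $\nu_K^M$ and $\nu_L^M$ respectively by the inclusion-induced maps $H_n(M|K\cup L;M_R)\to H_n(M|K;M_R)$ and $H_n(M|K\cup L;M_R)\to H_n(M|L;M_R)$ (a uniqueness argument as in Lemma~3.27(a) of [Ha]), the two composites around the right square agree in $H_{n-k}(M;G\otimes_R M_R)$. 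The choice of $-\nu_L^V$ in the middle vertical map is precisely what is needed to match the sign in the bottom-row Mayer--Vietoris homology map, which is a difference of inclusions. The middle square is handled by the same pattern with the inclusions $U\cap V\hookrightarrow U$ and $U\cap V\hookrightarrow V$, applying Corollary~1 to identify $\nu_{K\cap L}^{U\cap V}$ with the appropriate restrictions of $\nu_K^U$ and $\nu_L^V$.

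For the left square, I would fix a cocycle representative $\tilde c$ of the class in $H^{k-1}(M|K\cup L;G)$ and exploit the Leibniz identity $\partial(\tilde c\frown \alpha)=\tilde c\frown\partial\alpha-(\delta\tilde c)\frown \alpha$ recorded earlier in the paper. The key preparation is to lift $\nu_{K\cup L}^M$ to a singular chain $\alpha\in C_n(M;M_R)$ which, after iterated barycentric subdivision, admits a splitting $\alpha=\alpha_U+\alpha_V$ with $\alpha_U\in C_n(U;M_R)$ representing $\nu_K^U$ and $\alpha_V\in C_n(V;M_R)$ representing $-\nu_L^V$. Because $\partial\alpha$ is supported in $M-K\cup L$, the relation $\partial\alpha_U=-\partial\alpha_V$ modulo chains in $M-K\cup L$ forces $\partial\alpha_U$ to land in $C_{n-1}(U\cap V;M_R)$, and this is exactly a chain representative of the image of $[\alpha]$ under the homology Mayer--Vietoris boundary. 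Feeding this splitting into the Leibniz identity, and using the description of $\delta\tilde c$ coming from the standard short exact sequence of relative cochain complexes, the cross terms $\tilde c\frown\partial\alpha_U$ and $\tilde c\frown\partial\alpha_V$ assemble into the homology Mayer--Vietoris boundary applied to $\tilde c\frown\alpha$, while $(\delta\tilde c)\frown\alpha_U$ and $(\delta\tilde c)\frown\alpha_V$ combine to $(\delta\tilde c)\frown\nu_{K\cap L}^{U\cap V}$, yielding the required identity up to the Leibniz sign.

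The main obstacle is producing the chain-level splitting $\alpha=\alpha_U+\alpha_V$ with the correct relative-cycle properties, and then verifying that the subdivision, the cochain-level $\delta$, and the various inclusion-induced identifications are all compatible with the bundle coefficient system $M_R$. The identification $C_\ast^-(\widetilde M|\widetilde K;R)\leftrightarrow C_\ast(M|K;M_R)$ from Subsection~2.3 reduces this to a computation with ordinary singular chains on the double cover $\widetilde M$, where the barycentric subdivision operator commutes with the Deck transformation $\tau$ and hence preserves the minus-eigenspace, so every construction descends cleanly to the bundle.
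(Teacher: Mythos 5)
Your proposal follows essentially the same decomposition and strategy as the paper: the two blocks not involving $\delta$ or $\partial$ are handled by naturality of the cap product together with the compatibility of the $\nu$'s from Proposition 1 and Corollary 1, and the connecting-map block is established by the chain-level Leibniz computation of [Ha] pp.\ 246--247, which the paper simply cites as carrying over verbatim to twisted coefficients (with the sign change noted). Your sketch of that chain-level step is in the right spirit, though one should be a little more careful than the phrase ``$\alpha_U$ representing $\nu_K^U$'' suggests --- since $K$ may well meet $V$, $\alpha_V$ need not lie in $C_n(M-K)$, and Hatcher's argument is correspondingly more delicate about which class each summand represents --- but as the paper itself defers this block to [Ha], this imprecision does not constitute a gap relative to the paper's own proof.
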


\begin{proof}
  We begin with the two blocks without $\delta$ or $\partial$. Commutativity
  of the one on the left would follow once we establish the commutativity of
  the following
  
\begin{flushleft}
\begin{tikzcd}[column sep=scriptsize]
       H^k (M|K \cap L ; G) \arrow[r] \arrow[dd] \arrow[rd] &  H^k (M|K ; G) \arrow[r] & H^k (U|K ; G) \arrow[dd, "\frown \nu_K^U"] \\
      &  H^k (U|K \cap L ; G) \arrow[ru] \arrow[rd, "\frown \nu_{K \cap L}^{U}"]  \\
       H^k (U \cap V|K \cap L ; G) \arrow[ru] \arrow[r,"\frown \nu_{K \cap L}^{U \cap V}"']  &  H_{n - k} (U \cap V ; G \otimes_R (U \cap V)_R) \arrow[r] &   H_{n - k} (U ; G \otimes_R U_R) 
\end{tikzcd}
\end{flushleft}
  
  in which all arrows that is not a cap product is induced by inclusion.
  
  The above diagram commutes because of the compatibility of $\nu_K^U$'s.
  
  Similarly, commutativity of the block on the right in (6) follows from
  commutativity of\\

\begin{tikzcd}
       H^k (M|L ; G) \arrow[r] \arrow[d] \arrow[rd,"\frown \nu_{L}^M"']   &  H^k (M|K \cup L ; G) \arrow[d, "\frown \nu_{K \cup L}^M"] \\
       H^k (V|L ; G) \arrow[d, "\frown \nu_L^V"]   &  H_{n - k} (M ; G \otimes_R M_R) \\
       H_{n - k} (V ; G \otimes_R V_R) \arrow[ru] 
\end{tikzcd}
\\

  Again such commutativity comes from compatibility of $\nu_K^U$'s.
  
  For the block involving $\partial$ and $\delta$, [Ha] presented a
  detailed proof (p. 246-247) which carries verbatim to the case of twisted
  coefficients. So I shall not rewrite it in this note. Due to the difference
  in the convention of signs defining $\delta, \partial$, the block commutes
  up to a factor of $- 1$ instead of $(- 1)^{k + 1}$.
\end{proof}

\begin{corollary}
  Let $U, V$ be open subsets of $M$ with $U \cup V = M$. Let $G$ be a bundle
  of right $R$-modules over $M$. Then there is a (up to sign) commutative
  diagram

\begin{flushleft}
\begin{tikzcd}[cramped, column sep=small, row sep=0.2em]
   \  \arrow[r] &  H^k_c (U \cap V ; G) \arrow[r] \arrow[dddddd]  &  H^k_c
      (U ; G) \oplus H^k_c (V ; G) \arrow[r] \arrow[dddddd] &  H^k_c (M ; G) \arrow[r] \arrow[dddddd]  & \   \\
      & \\
      & \\
      & \\
      & \\
      & \\
   \ \arrow[r] &  H_{n - k} (U \cap V ; G \otimes_R M_R)  \arrow[r] &  H_{n - k} (U ; G \otimes_R M_R)  \arrow[r] &   H_{n - k} (M ; G \otimes_R M_R) \arrow[r] & \ \\
     &   & \oplus H_{n - k} (V ; G \otimes_R M_R) 
\end{tikzcd}
\end{flushleft}

\begin{flushleft}
where vertical maps are cap products with respective fundamental classes and the two rows are Mayer-Vietoris sequences .
\end{flushleft}
\end{corollary}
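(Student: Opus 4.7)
The strategy is to take the direct limit of Lemma 3's diagram (6) over the directed set of pairs $(K, L)$ of compact subsets with $K \subseteq U$ and $L \subseteq V$, ordered by componentwise inclusion. The diagram (6) is natural in $(K, L)$: all horizontal maps are induced by inclusion of pairs and are automatically functorial, while the vertical cap-product maps commute with enlargement of the compact sets thanks to the compatibility of the fundamental classes $\nu_K^U$, $\nu_L^V$, $\nu_{K \cup L}^M$, and $\nu_{K \cap L}^{U \cap V}$ established in the Corollary to Proposition~1 (and the dual compatibility of cohomology classes restricted through a smaller compact set).

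Once naturality is in place, I would take $\varinjlim_{(K, L)}$ of the entire diagram. Filtered colimits of $R$-modules are exact, so the top row remains a long exact sequence. The bottom row is independent of $(K, L)$ and is unaffected. Using the identification $\varinjlim_K H^k(X|K; G) = H^k_c(X; G)$ recalled at the end of Section 2.5, the limit of the top row becomes the compactly supported Mayer--Vietoris sequence, and each vertical arrow becomes the cap product with the respective fundamental class, since by construction these classes are the compatible families $\{\nu_K\}$.

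Completing the identification of the top row requires two cofinality statements. First, the family $\{K \cap L\}$ is cofinal in the compact subsets of $U \cap V$: any compact $C \subseteq U \cap V$ equals $C \cap C$, taking $K = L = C$. Second, and this is the main obstacle, the family $\{K \cup L\}$ must be cofinal in the compact subsets of $M$. Given a compact $C \subseteq M = U \cup V$, I would choose a partition of unity $\{\varphi_U, \varphi_V\}$ subordinate to $\{U, V\}$ (available since $M$ is paracompact) and set
\[ K \;=\; C \cap \{\varphi_U \geq 1/2\}, \qquad L \;=\; C \cap \{\varphi_V \geq 1/2\}. \]
These are closed in the compact set $C$, hence compact, satisfy $K \subseteq U$ and $L \subseteq V$, and cover $C$ because $\varphi_U + \varphi_V = 1$ forces at least one value to be $\geq 1/2$ at every point. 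With the two cofinality statements in hand, the remainder of the argument is a formal manipulation of filtered colimits.
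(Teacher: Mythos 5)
Your proof follows exactly the same route as the paper's (which simply says to take the direct limit of diagram (6) over the directed set of pairs $(K,L)$), but you have usefully filled in the cofinality and exactness bookkeeping that the paper leaves unstated. One small refinement: the cofinality of $\{K \cup L\}$ in the compact subsets of $M$ does not require a partition of unity or paracompactness of $M$. It follows from local compactness alone: for each $x$ in a given compact $C \subseteq M$, pick a neighborhood $W_x$ with compact closure lying in $U$ or in $V$; finitely many $W_x$ cover $C$, and intersecting $C$ with the union of those closures contained in $U$ (resp.\ $V$) produces compact $K \subseteq U$ and $L \subseteq V$ with $C \subseteq K \cup L$. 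Since the paper nowhere assumes $M$ paracompact, this elementary version is the one you want, though in the second-countable setting your partition-of-unity argument is equally valid.
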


\begin{proof}
  This follows from the preceding Lemma by taking the direct limit of (6) with
  respect to the directed set $\{ (K, L) |K \subseteq U \tmop{exact}, L
  \subseteq V, (K, L) \leqslant (K', L') \tmop{iff} K \subseteq K', L
  \subseteq L' \}$. 
\end{proof}

Now we can prove the Poincare Duality.

\begin{theorem}
  For any manifold $M$ and any bundle of right $R$-modules $G$, the
  homomorphism
  \[ H^k_c (M ; G) \overset{\frown [M]}{\longrightarrow} H_{n - k} (M ; G
     \otimes_R M_R) \]
  is an isomorphism.
\end{theorem}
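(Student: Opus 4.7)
The plan is to run the bootstrap argument from [Ha, Theorem 3.35] in the local-coefficient setting, with Corollary 2 providing the Mayer--Vietoris input. Write $D_U$ for the duality homomorphism $\frown [U] : H^k_c(U; G) \to H_{n-k}(U; G \otimes_R M_R)$ attached to any open $U \subseteq M$. Three tools drive the argument. First, if $D_U$, $D_V$, and $D_{U \cap V}$ are isomorphisms for open $U, V \subseteq M$, then by Corollary 2 and the Five Lemma $D_{U \cup V}$ is an isomorphism; the up-to-sign commutativity of Corollary 2 is harmless here. Second, if $U = \bigcup_i U_i$ is an increasing union of opens and each $D_{U_i}$ is an isomorphism, then so is $D_U$: compactly supported cohomology commutes with such direct limits by construction (any compact $K \subseteq U$ lies in some $U_i$), ordinary homology does as well, and the Corollary following Proposition 1 guarantees compatibility of the fundamental classes under the open inclusions $U_i \hookrightarrow U$. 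Third, the base case of a convex open $U$ inside a chart: here $\widetilde{U}$ is the trivial double cover, $U_R$ is (after fixing an orientation) canonically the constant bundle $R$, and $D_U$ reduces to the classical cap product $H^k_c(U; G) \to H_{n-k}(U; G)$. A direct computation shows both sides vanish except in degree $k = n$, where the cap product with the generator of $H_n(U|B; R)$ for a small ball $B \subseteq U$ is an isomorphism.

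With these three tools the bootstrap proceeds in the standard stages: (a) convex opens in a chart, by the base case; (b) finite unions of convex opens in a single chart, by induction on the number of pieces using tool one; (c) arbitrary open subsets of a chart, by writing them as countable increasing unions of stage (b) and applying tool two; (d) finite unions of chart-sized opens of $M$, again by tool one (with the intersections handled by stage (c)); and (e) arbitrary open subsets of $M$, including $M$ itself.

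The main obstacle I anticipate is the final stage (e) when $M$ is not second countable, since one cannot then directly present $M$ as a countable increasing union of opens handled at stage (d). The standard workaround is a Zorn's lemma argument on the poset $\mathcal{P}$ of opens $U \subseteq M$ for which $D_U$ is an isomorphism: $\mathcal{P}$ contains a neighbourhood base by stage (d), is closed under arbitrary increasing unions by tool two, and if a maximal element $U_{\max} \neq M$ existed then adjoining any chart $V \not\subseteq U_{\max}$ via tool one would enlarge it, contradicting maximality. A subsidiary concern throughout is that all the constructions --- cap product, Mayer--Vietoris connecting maps, and fundamental class --- must be natural under open inclusions $U \hookrightarrow M$ with $M_R$ replaced by $U_R$; Proposition 1 and its Corollary provide precisely this naturality and so ensure that the stages of the bootstrap fit together coherently.
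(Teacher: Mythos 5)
Your proposal is correct and follows essentially the same route as the paper: run the bootstrap of Hatcher's Theorem 3.35 with Corollary 2 supplying the commuting Mayer--Vietoris ladder, reducing to the base case of a contractible open where both bundles trivialize and the classical Poincar\'e duality for $\mathbb{R}^n$ applies. The paper simply cites Hatcher's argument and spells out only the base-case modification, whereas you have written out the stages explicitly, but the method is the same.
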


\begin{proof}
  The proof of [Ha] Theorem 3.35 applies with one exception. In the case
  when $M =\mathbbm{R}^n$, one use the fact that $\mathbbm{R}^n$ is
  contractible to deduce that $G$ and $(\mathbbm{R}^n)_R$ is isomorphic to a
  constant bundle. Choose and fix a base point of $\mathbbm{R}^n$, say $0$.
  Define $G_0 = G (0)$. Identify $(\mathbbm{R}^n)_R (0)$ with $R$. Then $H_c^k
  (\mathbbm{R}^n ; G)$, $\underset{K}{\underset{\longleftarrow}{\lim}} H_n
  (\mathbbm{R}^n |K ; M_R)$ and $H_{n - k} (\mathbbm{R}^n ; G \otimes_R
  (\mathbbm{R}^n)_R)$ can be canonically identified with $H_c^k (\mathbbm{R}^n
  ; G_0)$, $\underset{K}{\underset{\longleftarrow}{\lim}} H_n (\mathbbm{R}^n
  |K ; R)$ and $H_{n - k} (\mathbbm{R}^n ; G_0 \otimes_R R)$ respectively.
  Such identifications are compatible with the cap product. Under such
  identification, it is not hard to check from the definition that $[M] =
  [\mathbbm{R}^n] \in \underset{K}{\underset{\longleftarrow}{\lim}} H_n
  (\mathbbm{R}^n |K ; M_R)$ as defined in this note is an fundamental class in
  $\underset{K}{\underset{\longleftarrow}{\lim}} H_n (\mathbbm{R}^n |K ; R)$,
  i.e. an element that restrict to the local orientation at each $x \in
  \mathbbm{R}^n$ for a chosen orientation. Thus the Poincare Duality for the
  $R$-orientable manifold $\mathbbm{R}^n$ (the one we apply here is slightly more general than what appears in [Ha] since the coefficients can be in any $R$-module, but the same proof as in the simpler case carries verbatim to prove this generalized case) proves that $\frown [M]$ is an
  isomorphism.
\end{proof}

\

\
It should be mentioned that our result concerns merely cohomology with compact support. There is a version of Poincare duality for ordinary cohomology as recorded in [Sp], which is an isomorphism between Alexander cohomology and locally finite homology. The proof of this result, however, seems to require sheaf theory or some equally sophisticated machinery.
\\
\\

Appendix: The (Relative) Mayer Vietoris sequences with local coefficients

\

We shall need the following lemma, whose proof is essentially identical to
that of corresponding results for (co)homology with constant coefficients:

\begin{lemma}
  Let $X = \underset{\alpha}{\cup} \tmop{Int} X_{\alpha}$ where $X_{\alpha}$'s
  are subspaces. Let $G$ be a bundle of groups on $X$. Define $C_{\ast} \left(
  \underset{\alpha}{\Sigma} X_{\alpha} ; G \right) = \left\{
  \underset{i}{\Sigma} g_i \sigma_i \in C_{\ast} (X ; G) | \tmop{each}
  \sigma_i (\Delta^{\ast}) \tmop{is} \tmop{contained} \tmop{in} \tmop{some}
  X_{\alpha} \right\}$. Define $C^{\ast} \left( \underset{\alpha}{\Sigma}
  X_{\alpha} ; G \right) = \underset{\sigma : \Delta^{\ast} \rightarrow X,
  \sigma (\Delta^{\ast}) \subseteq X_{\alpha} \tmop{for} \tmop{some}
  \alpha}{\Pi} \sigma (e_0)$. Then the canonical $C_{\ast} \left(
  \underset{\alpha}{\Sigma} X_{\alpha} ; G \right) \rightarrow C_{\ast} (X ;
  G)$ and $C^{\ast} (X ; G) \rightarrow C^{\ast} \left(
  \underset{\alpha}{\Sigma} X_{\alpha} ; G \right)$ induce isomorphism on
  homology.
\end{lemma}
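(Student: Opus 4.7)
The plan is to imitate the standard proof of the ``small simplices'' theorem (cf.\ [Ha, Proposition 2.21] and the dual statement for cochains), adapting the barycentric subdivision operator and its chain homotopy to the local coefficient setting. Let $\mathcal{U} = \{X_\alpha\}$ and write $C_\ast^{\mathcal{U}}(X;G)$ and $C^\ast_{\mathcal{U}}(X;G)$ for the two complexes in the statement. Since every chain (resp.\ cochain) eventually sits in $C_\ast^{\mathcal{U}}$ (resp.\ restricts from $C^\ast$) once its simplices are small enough, the content is that the inclusion $C_\ast^{\mathcal{U}}(X;G) \hookrightarrow C_\ast(X;G)$ and the restriction $C^\ast(X;G) \twoheadrightarrow C^\ast_{\mathcal{U}}(X;G)$ are chain homotopy equivalences (in the cohomology case, after noting the restriction is surjective with acyclic kernel).

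The main step is therefore to construct, in the twisted setting, a subdivision chain map $S : C_\ast(X;G) \to C_\ast(X;G)$ and a chain homotopy $T$ with $\partial T + T\partial = \mathrm{id} - S$, mirroring Hatcher's formulas. The issue that does not occur with constant coefficients is bookkeeping of base points: if $\sigma : \Delta^n \to X$ carries the coefficient $g \in G(\sigma(e_0))$, then a summand $\sigma \circ \lambda$ of $S(\sigma)$ has its coefficient required to live in $G(\sigma(\lambda(e_0)))$. I would resolve this by fixing, once and for all, the straight-line path $\gamma_\lambda$ in $\Delta^n$ from $e_0$ to $\lambda(e_0)$; since $\Delta^n$ is simply connected the homotopy class of $\sigma \circ \gamma_\lambda$ in $X$ is canonical, so $G([\sigma \circ \gamma_\lambda]) : G(\sigma(e_0)) \to G(\sigma(\lambda(e_0)))$ gives the coefficient-transport map. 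Defining
\[ S(g\sigma) = \sum_\lambda \varepsilon_\lambda\, G([\sigma\circ\gamma_\lambda])(g)\cdot (\sigma \circ \lambda), \]
and similarly for $T$, with the same combinatorial $\varepsilon_\lambda$ and face structure as in the untwisted case, the identity $\partial T + T \partial = \mathrm{id} - S$ reduces to the untwisted identity: each summand in either side arises from the same pair $(\sigma,\lambda)$, and the transported coefficient agrees because the relevant paths in $\Delta^n$ are homotopic rel endpoints (by simple connectedness). Dually one defines $S^\ast$, $T^\ast$ on $C^\ast(X;G) = \prod_\sigma G(\sigma(e_0))$, and the same identity holds.

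With $S$ and $T$ in hand the remainder is routine. Iterating, one obtains $D_m = \sum_{i<m} T S^i$ with $\partial D_m + D_m \partial = \mathrm{id} - S^m$. For any chain $\alpha$, Lebesgue-number considerations against $\{\mathrm{Int}\,X_\alpha\}$ give an $m = m(\alpha)$ with $S^m(\alpha) \in C_\ast^{\mathcal{U}}(X;G)$; the standard argument of [Ha, Prop.~2.21] then promotes $D_m$ to a global chain homotopy inverse for the inclusion, proving the homology statement. For cohomology, the map $C^\ast(X;G) \to C^\ast_{\mathcal{U}}(X;G)$ is a surjection, and if $\phi$ lies in its kernel then $\phi \circ S^m = 0$ for $m = m(\phi)$ (using the dual fact that every small cochain is eventually killed by iterated subdivision), so $\phi = (\mathrm{id} - S^m)^\ast \phi = \delta(D_m^\ast \phi) + D_m^\ast(\delta \phi)$, yielding acyclicity of the kernel and hence isomorphism on $H^\ast$. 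The only real obstacle is the base-point tracking described above; once the canonical-path device is in place the rest of the proof is mechanical and identical to the constant-coefficient case.
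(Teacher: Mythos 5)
Your chain-level construction is exactly the right way to make precise what the paper merely asserts ("essentially identical to the constant-coefficient case"), and the key device is correct: since all coefficient transports arising in $S$, $T$, and in the twisted boundary formula are along images under $\sigma$ of paths lying in $\Delta^n$, simple connectedness of $\Delta^n$ makes them canonical, so the identities $\partial S = S\partial$ and $\partial T + T\partial = \mathrm{id} - S$ reduce coordinate-wise to the untwisted ones. (One cosmetic point: with the paper's convention a path $u$ gives $G([u])\colon G(u(1))\to G(u(0))$, so your transport should use the reverse of $\gamma_\lambda$; this affects nothing.) The homology half, spliced as in [Ha, Prop.\ 2.21] into a chain map $\rho\colon C_\ast(X;G)\to C_\ast^{\mathcal{U}}(X;G)$ with $\rho\iota=\mathrm{id}$ and a homotopy $D$ with $\partial D + D\partial = \mathrm{id}-\iota\rho$, is fine.

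The cohomology endgame, however, has a genuine gap: there is no $m=m(\phi)$ with $\phi\circ S^m=0$ for a cochain $\phi$ killed by restriction to small simplices. A cochain is evaluated on \emph{all} singular simplices, and simplices requiring arbitrarily many subdivisions to become $\mathcal{U}$-small exist whenever the cover is nontrivial; for such $\sigma$ the chain $S^m\sigma$ still contains non-small simplices on which $\phi$ need not vanish. Consequently the displayed identity $\phi=\delta(D_m^\ast\phi)+D_m^\ast(\delta\phi)$ with a single fixed $m$ is false in general (it holds only coordinate-wise, with $m$ depending on $\sigma$). The repair is to dualize the global homotopy equivalence you already built, not a power of $S$: define $\iota^\ast$ (restriction), and define $\rho^\ast$, $D^\ast$ coordinate-wise by $(\rho^\ast\psi)(\sigma)=$ the transported finite sum of values of $\psi$ on the (small) simplices occurring in $\rho(\sigma)$, and similarly for $D^\ast$; these are well defined on $\prod_\sigma G(\sigma(e_0))$ because $\rho(\sigma)$ and $D(\sigma)$ are finite sums of simplices factoring through $\sigma$, so the same in-simplex transport applies. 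Then $\iota^\ast\rho^\ast=\mathrm{id}$ and $\delta D^\ast + D^\ast\delta = \mathrm{id}-\rho^\ast\iota^\ast$, which gives the isomorphism on cohomology directly (no appeal to acyclicity of the kernel is needed). With that substitution your proof is complete and is the same argument the paper has in mind.
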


\

Given a pair $(X, Y) = (A \cup B, C \cup D)$ with $C \subseteq A, D \subseteq
B$ and $X = \tmop{Int}_X A \cup \tmop{Int}_X B, Y = \tmop{Int}_Y C \cup
\tmop{Int}_Y D$. For a bundle of groups $G$ on $X$, there are Mayer-Vietoris
sequences:
\[  \longrightarrow H_n (A \cap B,
   C \cap D ; G) \longrightarrow H_n (A, C ; G) \oplus H_n (B, D ; G)
   \longrightarrow H_n (X, Y ; G) \longrightarrow  \]
and
\[ \longrightarrow H^n (X, Y ; G) \longrightarrow H^n (A, C ; G) \oplus
   H^n (B, D ; G) \longrightarrow H^n (A \cap B, C \cap D ; G) \longrightarrow
   \]
The sequence for homology is deduced by essentiallly the same way as ordinary
(untwisted) coefficients (cf. Hatcher). On the other hand, the proof for the
cohomological Mayer-Vietoris sequence with untwisted coefficients (cf Hathcer
pp. 204) almost carries to the twisted case except when proving
\[ 0 \longrightarrow C^n (A + B, C + D ; G) \overset{\psi}{\longrightarrow}
   C^n (A, C ; G) \oplus C^n (B, D ; G) \overset{\varphi}{\longrightarrow} C^n
   (A \cap B, C \cap D ; G) \longrightarrow 0 \]
is exact. This sequence no longer comes from dualizing the corresponding
sequence for homology. Thus one has to prove the exactness by hand. The
non-trivial part is proving the surjectivity of $\varphi$. We will show this
by constructing for any $\alpha \in C^n (A \cap B, C \cap D ; G)$ a pair
$(\beta, \gamma) \in C^n (A, C ; G) \oplus C^n (B, D ; G)$ as follows:
\[ \beta (\sigma) = \left\{ \begin{array}{l}
     0 \quad \sigma (\Delta^n) \nsubseteq A \cap B \quad \tmop{or} \quad
     \sigma (\Delta^n) \subseteq C\\
     \alpha (\sigma) \quad \tmop{otherwise}
   \end{array} \right. \]
\[ \gamma (\sigma) = \left\{ \begin{array}{l}
     - \alpha (\sigma) \quad \sigma (\Delta^n) \subseteq C\\
     0 \hspace{4em} \tmop{otherwise}
   \end{array} \right. \]
It is not hard to verify that $\varphi (\beta, \gamma) = \alpha$. Thus
$\varphi$ is surjective.

\section{Acknowledgement}

The author would like to thank Prof. Kwasik for his generous help during the time of writing.

\section{Bibliography}
\begin{flushleft}
[Ha] A. Hatcher, Algebraic Topology(2002)

[Sp] E. Spanier, Algebraic Topology(1966), Springer-Verlag\\

[Wh] G. Whitehead, Elements of Homotopy Theory(1978), Springer-Verlag
\end{flushleft}

\end{document}